\documentclass[11pt]{amsart}

\usepackage{amssymb,amsfonts}
\usepackage{amsmath,amscd}
\usepackage[all,arc]{xy}
\usepackage{enumerate}
\usepackage{mathrsfs}
\usepackage[pdftex]{graphicx}
\usepackage[utf8]{inputenc}
\usepackage[T1]{fontenc}
\usepackage[usenames,dvipsnames]{color}
\usepackage[bookmarks=false]{hyperref}
\usepackage{dsfont}
\usepackage{tikz}
\usetikzlibrary{automata,positioning}

\theoremstyle{plain}
\newtheorem{thm}{Theorem}[section]
\newtheorem{cor}[thm]{Corollary}
\newtheorem{prop}[thm]{Proposition}
\newtheorem{lem}[thm]{Lemma}

\newenvironment{customThm2}[1]{\paragraph*{\textbf{ #1.}}\itshape}{\par}

\theoremstyle{definition}

\theoremstyle{remark}
\newtheorem{rmk}[thm]{Remark}


\newcommand{\bbH}{\mathbb{H}} 

\newcommand{\bbR}{\mathbb{R}} 
\newcommand{\bbS}{\mathbb{S}}

\newcommand*{\defeq}{\mathrel{\vcenter{\baselineskip0.5ex \lineskiplimit0pt
			\hbox{\scriptsize.}\hbox{\scriptsize.}}}%
	=}

\makeatletter
\let\c@equation\c@thm
\makeatother
\numberwithin{equation}{section}

\bibliographystyle{plain}

\title{On the displacement of generators of free Fuchsian groups}

\author{Yan Mary He}
\address{Department of Mathematics\\
  University of Chicago\\
  Chicago, IL 60637}
\email{he@math.uchicago.edu}
\date{\today}

\begin{document}

\maketitle

\begin{abstract}
We prove an inequality that must be satisfied by displacement of generators of free Fuchsian groups, which is the two-dimensional version of the $\log (2k-1)$ Theorem for Kleinian groups due to Anderson-Canary-Culler-Shalen (\cite{ACCS}). As applications, we obtain quantitative results on the geometry of hyperbolic surfaces such as the two-dimensional Margulis constant and lengths of a pair of based loops, which improves a result of Buser's.
\end{abstract}

\section{Introduction}
In \cite{ACCS}, Anderson, Canary, Culler and Shalen proved the following remarkable theorem which gives a displacement constraint for the generators of a geometrically finite free Kleinian group.

\begin{thm}[\cite{ACCS}, Theorem 6.1, $\log(2k-1)$ Theorem]
Let $k \ge 2$ be an integer and let $\Phi$ be a geometrically finite Kleinian group freely generated by $\{g_1, \cdots, g_k \}$. Let $z$ be any point in $\bbH^3$ and denote by $d_i = \text{d}(z, g_iz)$ the hyperbolic distance between $z$ and $g_iz$. Then we have
\begin{align}\label{log2k_ineq}
\sum_{i=1}^k \frac{1}{1+e^{d_i}} \le \frac{1}{2}.
\end{align}
In particular, there exists an $i \in \{1, \cdots, k\}$ such that $d_i \ge \log\left(2k-1\right).$
\end{thm}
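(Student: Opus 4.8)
The plan is to derive inequality~\eqref{log2k_ineq} from the action of $\Phi$ on its limit set $\Lambda=\Lambda(\Phi)\subseteq\partial\bbH^3=S^2$, and then extract the displacement bound by a pigeonhole argument. Since $\Phi$ is geometrically finite and non-elementary, I would first invoke the existence of a $\Phi$-invariant conformal density $\{\mu_x\}_{x\in\bbH^3}$ of dimension $D=\dim_H\Lambda$ --- the Patterson--Sullivan density of $\Phi$, which for geometrically finite $\Phi$ is a family of finite, non-atomic measures supported on $\Lambda$ satisfying $\gamma_*\mu_x=\mu_{\gamma x}$ for all $\gamma\in\Phi$ and the transformation rule $\frac{d\mu_x}{d\mu_y}(\xi)=e^{-D\beta_\xi(x,y)}$, where $\beta_\xi(x,y)=\lim_{t\to\xi}\bigl(\mathrm{d}(t,x)-\mathrm{d}(t,y)\bigr)$ is the Busemann cocycle. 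This is the one place geometric finiteness is used. Fixing the basepoint $z$, I would normalize so that $\mu\defeq\mu_z$ is a probability measure on $\Lambda$.

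Next I would record the ping-pong decomposition forced by freeness. Every point of $\Lambda$ except for the countably many parabolic fixed points is the endpoint of a unique infinite reduced word in $g_1^{\pm1},\dots,g_k^{\pm1}$; for each $i$ let $A_i$ (respectively $B_i$) be the set of endpoints of reduced words beginning with $g_i$ (respectively $g_i^{-1}$). Then the $2k$ sets $A_1,B_1,\dots,A_k,B_k$ are pairwise disjoint and cover $\Lambda$ off a $\mu$-null set, so $\sum_{i=1}^k\bigl(\mu(A_i)+\mu(B_i)\bigr)=1$. Moreover, left multiplication by $g_i$ carries any reduced word not beginning with $g_i^{-1}$ to a reduced word beginning with $g_i$; passing to endpoints yields the inclusions $g_i(\Lambda\setminus B_i)\subseteq A_i$ and, symmetrically, $g_i^{-1}(\Lambda\setminus A_i)\subseteq B_i$.

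Then I would convert the inclusions into measure estimates. The transformation rule gives, for any Borel set $E\subseteq\Lambda$, that $\mu(g_iE)=\mu_{g_i^{-1}z}(E)=\int_E e^{-D\beta_\xi(g_i^{-1}z,z)}\,d\mu(\xi)$, and the elementary bound $\beta_\xi(g_i^{-1}z,z)\le\mathrm{d}(g_i^{-1}z,z)=d_i$ then yields $\mu(g_iE)\ge e^{-Dd_i}\mu(E)$; likewise $\mu(g_i^{-1}E)\ge e^{-Dd_i}\mu(E)$. Taking $E=\Lambda\setminus B_i$ and $E=\Lambda\setminus A_i$ and using the ping-pong inclusions gives $\mu(A_i)\ge e^{-Dd_i}\bigl(1-\mu(B_i)\bigr)$ and $\mu(B_i)\ge e^{-Dd_i}\bigl(1-\mu(A_i)\bigr)$. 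Adding these, and writing $s_i=\mu(A_i)+\mu(B_i)$, gives $s_i(1+e^{-Dd_i})\ge 2e^{-Dd_i}$, i.e. $s_i\ge\frac{2}{1+e^{Dd_i}}$; summing over $i$ and using $\sum_{i=1}^k s_i=1$ produces $\sum_{i=1}^k\frac{1}{1+e^{Dd_i}}\le\frac12$.

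The main obstacle is that this last inequality coincides with~\eqref{log2k_ineq} only when $D=\dim_H\Lambda\le1$, since then $e^{Dd_i}\le e^{d_i}$; for free Kleinian groups $D$ can genuinely fall in $(1,2]$, and then the crude bound $\beta_\xi\le d_i$ is too lossy. Upgrading this step so that the exponent improves from $D$ to $1$ --- for instance by showing that on $\Lambda\setminus B_i$ the cocycle $\beta_\xi(g_i^{-1}z,z)$ stays well below $d_i$ outside a set of small $\mu$-measure, or by reducing to a configuration with a one-dimensional limit set --- is what I expect to be the technical heart of~\cite{ACCS}. Granting~\eqref{log2k_ineq}, the final assertion follows by pigeonhole: if $d_i<\log(2k-1)$ held for every $i$, then $e^{d_i}<2k-1$, hence $\frac{1}{1+e^{d_i}}>\frac{1}{2k}$ for each $i$ and $\sum_{i=1}^k\frac{1}{1+e^{d_i}}>\frac12$, contradicting~\eqref{log2k_ineq}; therefore $d_i\ge\log(2k-1)$ for at least one $i$.
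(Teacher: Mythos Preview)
This paper does not itself prove the $\log(2k-1)$ Theorem; it is quoted from \cite{ACCS} as background. However, the paper's proof of its own Main Theorem (Sections~2--3) is the two-dimensional adaptation of the \cite{CS}/\cite{ACCS} argument, so one can read off from those sections what is missing in your sketch.

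Your ping-pong decomposition of $\Lambda$, the resulting inequality $\sum_i\frac{1}{1+e^{Dd_i}}\le\frac12$, and the closing pigeonhole step are all fine. The gap is real, but your diagnosis of it is backwards. You expect the technical heart of \cite{ACCS} to be an improvement of the exponent from $D$ down to $1$. In fact the argument runs in the opposite direction: one first reduces, by a density/deformation argument among free Kleinian groups, to groups for which every $\Phi$-invariant positive superharmonic function on $\bbH^3$ is constant; by the $n=3$ case of Proposition~\ref{prop3.9} this forces the Patterson--Sullivan density to coincide with the area density $\mathscr{A}$, so in particular $D=n-1=2$, not $D\le1$.

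With $D=2$ your crude Busemann bound (equivalently, the pointwise estimate $P^2\le e^{2d_i}$) yields only $\sum_i\frac{1}{1+e^{2d_i}}\le\frac12$, strictly weaker than~\eqref{log2k_ineq}; so even in the favourable case your estimate is not sharp enough. The missing idea is that once the Patterson--Sullivan measure \emph{is} the area measure $A_z$, the decomposed measures of Proposition~\ref{paradecomp_prop} satisfy $\nu_\psi\le A_z$, and one may replace the pointwise bound on $P^{n-1}$ by the sharp integral of $P^{n-1}$ over a spherical cap of the prescribed area, via Lemma~\ref{measure_theory_lemma}. The three-dimensional analogue of Lemma~\ref{estimates_lem}, carried out on $S^2_\infty$ with $P^2$ in place of $P$, is precisely what produces the term $\frac{1}{1+e^{d_i}}$.
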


The $\log(2k-1)$ Theorem has many interesting applications in relating quantitative geometry, especially volume estimates, of hyperbolic $3$-manifolds and their classical topological invariants. Using topological arguments, one can show that the fundamental group of a hyperbolic $3$-manifold contains many free subgroups. The constraints on these free subgroups consequently impose constraints on the geometry of the manifold, which can be used to obtain strong volume estimates (see \cite{ACCS} and \cite{CS} for more details). 

In this paper, we study the displacement constraint problem in two dimensions. More specifically, we obtain an inequality analogous to (\ref{log2k_ineq}) for free Fuchsian groups.

\vspace{0.3cm}
\begin{customThm2}{Main Theorem}
Let $k \ge 2$ be an integer and let $\Phi$ be a free Fuchsian group generated by $\{g_1 \cdots g_k \}$. Let $z$ be any point in $\bbH^2$ and denote by $d_i = \text{d}(z, g_iz)$ the hyperbolic distance between $z$ and $g_iz$. Then we have
\begin{align}\label{ineq_intro}
	\sum_{i=1}^k \arccos \left(\tanh\left(\frac{d_i}{2}\right)\right)
	&\le \frac{\pi}{2}.
\end{align}
In particular, there exists an $i \in \{1,\cdots,k\}$ such that $$d_i \ge \log\left(\dfrac{1+\cos\frac{\pi}{2k}}{1-\cos\frac{\pi}{2k}}\right).$$ When $k=2$, the upper bound is realized by $\Phi = \Gamma(2) = \left\langle \left( \begin{array}{cc}
1 & 2 \\
0 & 1 \end{array} \right), \left( \begin{array}{cc}
1 & 0 \\
2 & 1 \end{array} \right) \right\rangle$ and $z = \sqrt{-1}$.
\end{customThm2}
\vspace{0.3cm}

Although a free Fuchsian group is also a geometrically finite Kleinian group, our theorem is strictly stronger than the $\log (2k-1)$ Theorem. An easy way to see this is to observe that, when $k=2$, $\log(3+2\sqrt{2})$ is greater than $\log 3$ and it is realized. Rigorous calculations are carried out in Remark \ref{rmk_compare}.

The Main Theorem can be applied to study quantitative geometry of hyperbolic surfaces. Recall that, if $M$ is a closed orientable hyperbolic $n$-manifold, a positive real number $\varepsilon$ is called a {\it Margulis number} for $M$ if for any $z \in \bbH^n$ and $\alpha, \beta \in \pi_1M$, $\text{max}\{d(z,\alpha(z)), d(z,\beta(z))\} < \varepsilon$ implies that $\alpha$ and $\beta$ commute. The Margulis Lemma (\cite{BP}, Chapter D) states that for every $n \ge 2$ there is a positive constant $\varepsilon_n$ called the {\it Margulis constant} which serves as a Maruglis number for every closed orientable hyperbolic $n$-manifold. It follows from the Main Theorem that $\log\left(3+2\sqrt{2}\right)$ is the $2$-dimensional Margulis constant. This is consistent with the number found by Yamada in \cite{Yamada} using the signatures of non-elementary torsion-free $2$-generator Fuchsian groups.

The Main Theorem also imposes constraints on the lengths of a pair of based loops on a hyperbolic surface. A classical result of Buser's (\cite{Bu}) states that if $\alpha_1$ and $\alpha_2$ are two intersecting simple closed geodesics on a compact hyperbolic surface, then their lengths $\ell_1$ and $\ell_2$ must satisfy $\sinh(\ell_1/2) \cdot \sinh(\ell_2/2) \ge 1$. Using the Main Theorem, one can show that the same inequality holds but $\alpha_1$ and $\alpha_2$ can be promoted to be a pair of based loops as long as their (free) homotopy classes generate a free group of rank $2$.

\subsection{Strategy of the proof}
We first prove a special case of the Main Theorem where we assume that the quotient surface $\Sigma_{\Phi}$ has finite area (Theorem \ref{finite_area_thm}). The proof is based on Culler-Shalen's paradoxical decomposition of the Patterson-Sullivan measure. 

If $\Phi$ is a free group generated by $\{g_1, \cdots, g_k\}$, then $\Phi$ admits a decomposition into a singleton $\{Id\}$ and $2k$ disjoint subsets, each one of which consists of reduced words with a given first letter coming from the symmetric generating set. This decomposition has the property that each of the $2k$ subsets is mapped to the complement of another by left multiplying a generator or the inverse of a generator.

Pick a point $z$ in $\bbH^2$ and identify the orbit $\Phi \cdot z$ with $\Phi$. Since the Patterson-Sullivan measure $\mu$ associated to $\Phi$ is constructed as the limit of measures supported on the orbit, it respects the combinatorial structure of the group; that is, there exists a decomposition of $\mu$ into $2k$ measures, one for each element in the symmetric generating set, such that each one of the measures is transformed into the complement of another by a generator or its inverse (Proposition \ref{paradecomp_prop}).

If the quotient surface $\Sigma_{\Phi}$ of a Fuchsian group $\Phi$ has finite area, then every positive $\Phi$-invariant superharmonic function on $\bbH^2$ is constant. Then one can show (Proposition \ref{prop3.9}) that the Patterson-Sullivan measure $\mu$ associated to $\Phi$ is in fact the arc length measure on the boundary circle at infinity $\bbS_{\infty}^1$. 
Therefore, we obtain a decomposition of the arc length measure into $2k$ measures. These decomposed measures give an estimate for the displacement of each generator (Lemma \ref{estimates_lem}), which leads to the inequality involving displacements for all generators.

To prove the theorem when $\Sigma_{\Phi}$ has infinite area, we prove a simple but seemingly surprising fact (Lemma \ref{lem_holemb}) that $\Sigma_{\Phi}$ admits a holomorphic embedding, which is also a homotopy equivalence, into a finite area surface $S_{\infty}$. To this end, we first show that each funnel end of $\Sigma_{\Phi}$ is conformal to an annulus. Therefore, $\Sigma_{\Phi}$ is conformal to the Riemann surface $S$ obtained by gluing an annulus to each boundary geodesic of its convex core $C(\Sigma_{\Phi})$. Moreover, $S$ is holomorphically embedded in the finite-area surface $S_{\infty}$ obtained by gluing a half infinite cylinder $S^1 \times \bbR^+$ to each boundary geodesic of $C(\Sigma_{\Phi})$. The theorem then follows as holomorphic maps are distance non-increasing, the function $\arccos \tanh (x/2)$ is strictly decreasing and the inequality holds on $S_{\infty}$.

\subsection{Organization of the paper}
The paper is organized as follows. In Section 2, we give a very brief overview of conformal densities and Culler-Shalen's paradoxical decompositions, which we will need to prove the finite co-area case of the Main Theorem in Section 3. Section 4 is devoted to proving the Main Theorem and we discuss its geometric applications in Section \ref{section_app}.

\subsection{Acknowledgments} I would like to thank Peter Shalen for his invaluable advice, gracious support and encouragement throughout this project. I thank David Dumas for pointing out the proof of Lemma \ref{lem_holemb}, Danny Calegari for helpful conversations and Yong Hou for commenting on the draft.


\section{Conformal densities and paradoxical decompositions}
In this section, we give an expository account of conformal densities and Culler-Shalen's paradoxical decompositions of the Patterson-Sullivan measure. Interested readers are refered to \cite{CS} for more details. 

\subsection{Conformal densities}
Recall that the Poisson kernel $P: \bbH^n \times \bbH^n \times S_{\infty}^{n-1} \to \bbR$ is given by
$$P(z,z',\zeta) \defeq (\cosh d(z,z') - \sinh(d(z,z')\cos\angle z'z\zeta)^{-1}$$ where $d$ denotes the hyperbolic distance.

Let $n \ge 2$ be an integer and $D \in [0,n-1]$. A {\it $D$-conformal density for $\bbH^n$} is a family $\mathscr{M} = (\mu_z)_{z \in \bbH^n}$ of finite Borel measures on $S_{\infty}^{n-1}$ such that $d\mu_{z'} = P(z,z',\cdot)^D d\mu_z$ for any $z, z' \in \bbH^n$. For example, the {\it area density} $\mathscr{A} = (A_z)_{z\in \bbH^n}$ is an $(n-1)$-conformal density, where $A_z$ is the normalized area measure on $S_{\infty}^{n-1}$ induced by the round metric centered at $z$. 

Let $\Gamma$ be a group of isometries of $\bbH^n$. A conformal density $\mathscr{M} = (\mu_z)_{z \in \bbH^n}$ is {\it $\Gamma$-invariant} if $\gamma^*_{\infty}\mu_{\gamma z} = \mu_z$, for all $z\in \bbH^n$ and $\gamma \in \Gamma$. Clearly, the area density is $\Gamma$-invariant.

Given a $D$-conformal density $\mathscr{M} = (\mu_z)_{z \in \bbH^n}$, we define a nonnegative function $u_{\mathscr{M}}: \bbH^n \to [0,1]$ by $u_{\mathscr{M}}(z) = \mu_z(S_{\infty}^{n-1})$. In case of the area density $\mathscr{A}$, $u_{\mathscr{A}} \equiv 1$. It can be shown that the function $u_{\mathscr{M}}(z)$ is smooth and satisfies the equation $$\Delta u_{\mathscr{M}} = -D(n-D-1)u_{\mathscr{M}}$$ where $\Delta$ is the (hyperbolic) Laplacian on $\bbH^n$. In particular, $u_{\mathscr{M}}$ is superharmonic. Moreover, if $\mathscr{M}$ is $\Gamma$-invariant for some group $\Gamma$ of isometries of $\bbH^n$, then $u_{\mathscr{M}}$ is also $\Gamma$-invariant.

The next proposition asserts the uniqueness of a $\Gamma$-invariant conformal density for $\bbH^n$ when all $\Gamma$-invariant positive superharmonic functions are constant.
\begin{prop}[\cite{CS}, Proposition 3.9]\label{prop3.9}
	Let $\Gamma$ be a non-elementary discrete group of isometries of $\bbH^n$. Suppose every $\Gamma$-invariant positive superharmonic function on $\bbH^n$ is constant. Then any $\Gamma$-invariant conformal density for $\bbH^n$ is a constant multiple of the area density.
\end{prop}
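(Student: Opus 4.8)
The plan is to extract a $\Gamma$-invariant positive superharmonic function from the density, use the hypothesis to force it to be constant, read off the only admissible values of the exponent, and then identify the density with the area density by appealing to the uniqueness of the Poisson (Herglotz--Martin) representation of positive harmonic functions on $\bbH^n$. Let $\mathscr M = (\mu_z)_{z\in\bbH^n}$ be a $\Gamma$-invariant $D$-conformal density. First I would dispose of the trivial case: if $\mu_{z_0}=0$ for a single $z_0$, then since $d\mu_z = P(z_0,z,\cdot)^D\,d\mu_{z_0}$ and the Poisson kernel is everywhere positive, $\mu_z=0$ for all $z$ and $\mathscr M = 0\cdot\mathscr A$. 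So assume every $\mu_z$ is nonzero, so that $u_{\mathscr M}(z)=\mu_z(S_{\infty}^{n-1})>0$ for all $z$. By the facts recalled just before the proposition, $u_{\mathscr M}$ is smooth, $\Gamma$-invariant, superharmonic, and satisfies $\Delta u_{\mathscr M} = -D(n-D-1)u_{\mathscr M}$; the hypothesis therefore makes it a positive constant $c$. Substituting $u_{\mathscr M}\equiv c$ into this equation gives $D(n-D-1)c = 0$, hence $D(n-D-1)=0$, so $D=0$ or $D=n-1$.

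I would then rule out $D=0$. In that case $P^0\equiv 1$, so every $\mu_z$ equals a fixed measure $\mu$, and $\Gamma$-invariance becomes $(\gamma_\infty)_*\mu = \mu$ for all $\gamma\in\Gamma$; thus $\mu$ is a finite $\Gamma$-invariant measure on $S_{\infty}^{n-1}$, which we normalize to a probability measure. Since $\Gamma$ acts properly discontinuously on its domain of discontinuity $\Omega$, summing $\mu$ over a Borel fundamental domain for the action on $\Omega$ forces $\mu(\Omega)=0$ (otherwise $\mu$ would have infinite total mass), so $\mu$ is carried by the limit set; then the \emph{north--south dynamics} of the powers of a loxodromic $\gamma\in\Gamma$, combined with $(\gamma^m_\infty)_*\mu = \mu$ for all $m$, forces $\mu$ to be supported on the fixed-point pair of $\gamma$. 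Since $\Gamma$ is non-elementary it contains two loxodromics with disjoint fixed-point pairs; applying this to each forces $\mu$ to be the zero measure, contradicting that it is a probability measure. (This is the classical fact that a non-elementary discrete group of isometries of $\bbH^n$ admits no invariant probability measure on $S_{\infty}^{n-1}$.) Hence $D=n-1$.

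Finally, with $D=n-1$ the function $u_{\mathscr M}$ is harmonic, and fixing a base point $o$ and writing $k(z,\zeta):=(dA_z/dA_o)(\zeta)=P(o,z,\zeta)^{n-1}$, the conformal density relation gives
\[
u_{\mathscr M}(z) = \int_{S_{\infty}^{n-1}} d\mu_z = \int_{S_{\infty}^{n-1}} k(z,\zeta)\, d\mu_o(\zeta),
\]
so $\mu_o$ is a finite measure representing the positive harmonic function $u_{\mathscr M}$ through the Poisson kernel. The constant function $u_{\mathscr M}\equiv c$ is also represented by $c\,A_o$, because $\int_{S_{\infty}^{n-1}} k(z,\zeta)\, d(cA_o)(\zeta) = c\,A_z(S_{\infty}^{n-1}) = c$; so the uniqueness of the Poisson/Herglotz representation of positive harmonic functions on $\bbH^n$ (equivalently, the identification of $S_{\infty}^{n-1}$ with the Martin boundary of $\bbH^n$) forces $\mu_o = c\,A_o$. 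Transporting this along the conformal density relation gives $d\mu_z = k(z,\cdot)\,d\mu_o = c\,k(z,\cdot)\,dA_o = c\,dA_z$ for every $z$, that is, $\mathscr M = c\,\mathscr A$.

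I expect the real content to sit in the last step: the whole argument funnels into the uniqueness of the Herglotz--Martin representation of positive harmonic functions on $\bbH^n$, which is the one genuinely non-formal input and would be quoted from classical potential theory; the exclusion of $D=0$ requires the loxodromic-dynamics lemma but is otherwise routine, and the passage from ``$\Gamma$-invariant'' to ``constant'' is handed to us directly by the hypothesis.
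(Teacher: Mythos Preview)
Your argument is correct and follows the same outline as the paper's sketch: pass from $\mathscr{M}$ to the $\Gamma$-invariant superharmonic function $u_{\mathscr M}$, use the hypothesis to make it a constant, and then invoke the fact that an $(n-1)$-conformal density is determined by its associated harmonic function (which is exactly the Poisson/Herglotz uniqueness you cite). The only difference is that you are more careful than the sketch: you explicitly extract $D\in\{0,n-1\}$ from the eigenvalue equation and eliminate $D=0$ via the standard ``no finite invariant measure on the boundary for a non-elementary group'' argument, whereas the paper's sketch jumps directly to the $(n-1)$-conformal case without comment.
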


For the convenience of the reader, we sketch the proof here.
\begin{proof}
	A $\Gamma$-invariant conformal density gives a $\Gamma$-invariant positive superharmonic function on $\bbH^n$ by integrating the Poisson kernel against the measure. Since $u_{\mathscr{M}}(z) \equiv C$, which is a constant, $u_{\mathscr{M}} = u_{C\mathscr{A}}$, where $\mathscr{A}$ is the area density. Moreover, it can be shown that any $(n-1)$-conformal density is determined by the associated harmonic function. Hence, $\mathscr{M} = C\mathscr{A}$.
\end{proof}

\subsection{Culler-Shalen's paradoxical decompositions}
A free group on $k$ generators can be decomposed into $2k+1$ disjoint sets depending on the first letter of the group element. We show in this subsection (Proposition \ref{paradecomp_prop}) that there exists a decomposition of the Patterson-Sullivan measure which respects this decomposition of the group. The proof is based on a generalized Patterson's construction of conformal densities for uniformly discrete subsets of $\bbH^n$.

A subset $W$ of $\bbH^n$ is {\it uniformly discrete} if there exists $\delta>0$ such that $d(z,w) > \delta$ for all $z,w \in W$. In particular, any orbit of a discrete group of isometries of $\bbH^n$ is uniformly discrete.

\begin{lem} [\cite{CS}, Proposition 4.2]\label{lem4.2}
Let $W$ be an infinite uniformly discrete subset of $\bbH^n$. Let $\mathcal{B}$ be a countable collection of subsets of $W$ which contains $W$. Then there exists a $D \in [0,n-1]$ and a family $(\mathscr{M}_V )_{V \in \mathcal{B}}$ of $D$-conformal densities satisfying the following conditions
\begin{enumerate}
\item $\mathscr{M}_W \neq 0.$
\item For any finite family $\left(V_i\right)_{i=1}^m$ of disjoint sets in $\mathcal{B}$ such that $V = \displaystyle\amalg_{i=1}^m V_i \in \mathcal{B}$, we have $\mathscr{M}_V = \sum_{i=1}^{m}\mathscr{M}_{V_i}$.
\item For any $V \in \mathcal{B}$ and any isometry $\gamma$ of $\bbH^n$ with $\gamma V \in \mathcal{B}$, we have $\gamma_{\infty}^*(\mathscr{M}_{\gamma V}) = \mathscr{M}_V$.
\item For any $V \in \mathcal{B}$, supp$(\mathscr{M}_V)$ is contained in the limit set of $V$. In particular, for any finite set $V \in \mathcal{B}$, we have $\mathscr{M}_V = 0$.
\end{enumerate}
\end{lem}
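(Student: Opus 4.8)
The plan is to obtain all the conformal densities $\mathscr{M}_V$, $V\in\mathcal{B}$, simultaneously as weak-$*$ subsequential limits of explicit finite measures built from a \emph{single} Poincaré series, following Patterson's construction. Fix a base point $y\in\bbH^n$ and set $g_W(s)=\sum_{w\in W}e^{-s\,d(y,w)}$. Because $W$ is uniformly discrete, disjointness of the balls $B(w,\delta/2)$ gives $\#\{w\in W:d(y,w)\le R\}\le C e^{(n-1)R}$, so $g_W$ converges for $s>n-1$; let $D\in[0,n-1]$ be its exponent of convergence. By Patterson's trick one chooses a non-decreasing, slowly growing weight $h:[0,\infty)\to(0,\infty)$ --- slowly growing meaning $h(r+t)/h(r)\to 1$ as $r\to\infty$ for each fixed $t$ --- such that $\tilde g_W(s):=\sum_{w\in W}h(d(y,w))e^{-s\,d(y,w)}$ still has exponent $D$ but now \emph{diverges} at $s=D$ (if $g_W$ already diverges at $D$ we take $h\equiv 1$).

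For $s>D$, $V\in\mathcal{B}$ and $z\in\bbH^n$ define the finite measure on the compactification $\bbH^n\cup S_{\infty}^{n-1}$
\[
\mu^s_{V,z}\;=\;\frac{1}{\tilde g_W(s)}\sum_{w\in V}h(d(y,w))\,e^{-s\,d(z,w)}\,\delta_w .
\]
The triangle inequality yields $\mu^s_{V,z}(\bbH^n\cup S_{\infty}^{n-1})\le e^{s\,d(y,z)}$, bounded uniformly for $s$ near $D$, so by compactness of $\bbH^n\cup S_{\infty}^{n-1}$ and a diagonal argument over the countable index set $\mathcal{B}\times Q$ --- with $Q$ a countable dense subset of $\bbH^n$ containing $y$ --- we extract $s_j\downarrow D$ along which $\mu^{s_j}_{V,z}$ converges weak-$*$ for all $V\in\mathcal{B}$ and all $z\in Q$; write $\mu_{V,z}$ for the limits. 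The divergence of $\tilde g_W$ at $D$ is the decisive input: for each $w_0\in W$ the atom $\mu^{s_j}_{V,z}(\{w_0\})\to 0$, and as $W$ is locally finite $\mu^{s_j}_{V,z}(K)\to 0$ for every compact $K\subset\bbH^n$; hence every $\mu_{V,z}$ lives on $S_{\infty}^{n-1}$, and since $\operatorname{supp}\mu^s_{V,z}=V$ it lives on the limit set of $V$. This gives (4) (for finite $V$ the limit set is empty, so $\mathscr{M}_V=0$), and since $\mu^{s_j}_{W,y}$ is a probability measure for all $j$ its limit $\mu_{W,y}$ is a probability measure on $S_{\infty}^{n-1}$, giving (1). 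Additivity (2) is visible already at finite level --- $\mu^s_{V,z}=\sum_i\mu^s_{V_i,z}$ for disjoint $V_i$ with union $V$ in $\mathcal{B}$ --- and passes to the limit for $z\in Q$.

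It remains to upgrade the $\mu_{V,z}$ to $D$-conformal densities and to verify equivariance. The key identity is the standard boundary asymptotic $e^{-s(d(z',w)-d(z,w))}\to P(z,z',\zeta)^s$ as $w\to\zeta\in S_{\infty}^{n-1}$: since $\mu^s_{V,z'}$ and $\mu^s_{V,z}$ are mutually absolutely continuous with Radon--Nikodym factor $e^{-s(d(z',w)-d(z,w))}$, and in the limit all mass sits on $S_{\infty}^{n-1}$ where this factor is asymptotic to $P(z,z',\cdot)^s\to P(z,z',\cdot)^D$, one gets $d\mu_{V,z'}=P(z,z',\cdot)^D\,d\mu_{V,z}$ for \emph{any} $z,z'$ at which the approximating measures converge --- in particular for all $z,z'\in Q$. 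Using the cocycle identity $P(z,z',\zeta)P(z',z'',\zeta)=P(z,z'',\zeta)$, this relation extends unambiguously to a genuine $D$-conformal density $\mathscr{M}_V=(\mu_{V,z})_{z\in\bbH^n}$. For (3), fix $V$ and an isometry $\gamma$ with $\gamma V\in\mathcal{B}$; reindexing by $w=\gamma v$ and using $d(z,\gamma v)=d(\gamma^{-1}z,v)$ gives the exact identity
\[
\mu^s_{\gamma V,z}\;=\;\gamma_*\!\left(\frac{1}{\tilde g_W(s)}\sum_{v\in V}h(d(y,\gamma v))\,e^{-s\,d(\gamma^{-1}z,v)}\,\delta_v\right),
\]
whose right-hand side differs from $\gamma_*\mu^s_{V,\gamma^{-1}z}$ only through the weights $h(d(y,\gamma v))$ versus $h(d(y,v))$. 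Since $|d(y,\gamma v)-d(y,v)|\le d(y,\gamma^{-1}y)$ is bounded and $h$ is slowly growing, these weights have ratio tending to $1$ along the tail of $W$, and --- using once more the escape of mass to the boundary --- along $s_j$ the measures $\mu^{s_j}_{\gamma V,\gamma z_0}$ converge to $\gamma_*\mu_{V,z_0}$ for every $z_0\in Q$. As the conformal transformation rule holds in the limit between any pair of convergence points, this identifies $\mathscr{M}_{\gamma V}$ with $\gamma_*\mathscr{M}_V$, i.e. $\gamma_\infty^*(\mathscr{M}_{\gamma V})=\mathscr{M}_V$; since a $D$-conformal density is determined by its value at a single point, it suffices to know this at $z_0$.

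The main obstacle is the interplay between weak-$*$ convergence of the $\mu^s_{V,z}$ --- which in the limit only ``sees'' the sphere $S_{\infty}^{n-1}$ --- and the merely pointwise (in $w$) convergence of the correcting factors $e^{-s(d(z',w)-d(z,w))}\to P(z,z',\zeta)^s$ and $h(d(y,\gamma v))/h(d(y,v))\to 1$. Making these passages to the limit rigorous requires the uniformity of the boundary asymptotics on small neighbourhoods of each $\zeta\in S_{\infty}^{n-1}$, together with the divergence of $\tilde g_W$ at $D$ forcing all mass off every fixed compact set; producing the Patterson weight $h$ with the required slow growth and divergence is the other technical point.
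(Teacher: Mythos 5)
The paper does not prove this lemma at all --- it is quoted verbatim from Culler--Shalen (\cite{CS}, Proposition 4.2) --- and your proposal is a correct reconstruction of precisely the argument used there: the Patterson construction run simultaneously over the countable family $\mathcal{B}$, normalized by the single modified Poincar\'e series of $W$, with the divergence at the critical exponent forcing the mass to the boundary and the uniform boundary asymptotics of $e^{-s(d(z',w)-d(z,w))}$ and of the Patterson weight ratios delivering conformality and equivariance. No gaps; the technical points you flag at the end (uniformity of the kernel asymptotics on the compactification and the existence of the slowly growing weight $h$) are exactly the ones handled in \cite{CS}.
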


Note that by taking $\mathcal{B} = \{W = \text{any orbit of } \Gamma \}$, we obtain the Patterson-Sullivan density which is $\Gamma$-invariant and supported on the limit set of $\Gamma$.

\begin{prop}\label{paradecomp_prop}
	Let $\Gamma$ be a free Fuchsian group with a symmetric generating set $\Psi = \{g_1^{\pm} \cdots g_k^{\pm}\}$. Let $z_0 \in \bbH^2$. Then there exist a number $D \in [0,1]$, a $\Gamma$-invariant $D$-conformal density $\mathscr{M}=(\mu_{z_0})$ for $\bbH^2$ and a family $(\nu_{\psi})_{\psi \in \Psi}$ of Borel measures on $S_{\infty}^1$ such that
	\begin{enumerate}
		\item $\mu_{z_0}(S_{\infty}^1)=1$,
		\item $\mu_{z_0} = \sum_{\psi \in \Psi} \nu_{\psi}$, and
		\item for each $\psi \in \Psi$ we have $$\int_{S_{\infty}^{1}} P(z_0,\psi^{-1}z_0,\zeta)^D d\nu_{\psi^{-1}}(\zeta) = 1 - \int_{S_{\infty}^{1}} d\nu_{\psi}(\zeta).$$
	\end{enumerate} 
\end{prop}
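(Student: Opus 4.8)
The plan is to obtain the decomposition directly from Culler--Shalen's construction of conformal densities for uniformly discrete sets (Lemma \ref{lem4.2}), fed by the ping-pong combinatorics of the free group $\Gamma$. For $\psi \in \Psi$ let $\Gamma_\psi \subseteq \Gamma$ denote the set of reduced words whose first letter is $\psi$, so that $\Gamma = \{Id\} \amalg \coprod_{\psi \in \Psi} \Gamma_\psi$. The combinatorial fact I will use is that, for each $\psi$, the elements of $\Gamma$ that are either $Id$ or reduced words not beginning with $\psi$ are precisely those of the form $\psi w$ with $w \in \Gamma_{\psi^{-1}}$; in other words
\[
\Gamma = \Gamma_\psi \amalg \psi\,\Gamma_{\psi^{-1}}.
\]
Since $\Gamma$ is free it is torsion-free, hence acts freely on $\bbH^2$, so $\gamma \mapsto \gamma z_0$ is a bijection of $\Gamma$ onto $W \defeq \Gamma z_0$ and the identities above pass to $W$ undisturbed: $W = \{z_0\} \amalg \coprod_\psi \Gamma_\psi z_0 = \Gamma_\psi z_0 \amalg \psi\,\Gamma_{\psi^{-1}} z_0$ for every $\psi \in \Psi$.

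Next I would apply Lemma \ref{lem4.2} to $W$, taking $\mathcal{B}$ to be the countable family consisting of $W$, of the translates $\gamma\,\Gamma_\psi z_0$ and $\gamma\{z_0\}$ for $\gamma \in \Gamma$, $\psi \in \Psi$, and of all finite disjoint unions of these sets; this $\mathcal{B}$ contains $W$ and is $\Gamma$-invariant. The lemma furnishes $D \in [0,n-1] = [0,1]$ and a family $(\mathscr{M}_V)_{V \in \mathcal{B}}$ of $D$-conformal densities which is finitely additive over disjoint unions lying in $\mathcal{B}$, satisfies $\gamma_\infty^*(\mathscr{M}_{\gamma V}) = \mathscr{M}_V$, vanishes on finite sets, and has $\mathscr{M}_W \neq 0$. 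Because $\gamma W = W$ for all $\gamma \in \Gamma$, the density $\mathscr{M}_W$ is $\Gamma$-invariant; and since $\mathscr{M}_W \neq 0$ and the Poisson kernel is everywhere positive, $C \defeq (\mathscr{M}_W)_{z_0}(S_{\infty}^{1}) > 0$. I then set $\mathscr{M} \defeq C^{-1}\mathscr{M}_W = (\mu_z)_{z \in \bbH^2}$ and $\nu_\psi \defeq C^{-1}(\mathscr{M}_{\Gamma_\psi z_0})_{z_0}$. Property (1) holds by construction, and property (2) follows from additivity applied to $W = \{z_0\} \amalg \coprod_\psi \Gamma_\psi z_0$ together with $\mathscr{M}_{\{z_0\}} = 0$.

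For property (3) I would argue as follows. Additivity applied to $W = \Gamma_\psi z_0 \amalg \psi\Gamma_{\psi^{-1}} z_0$ gives $\mathscr{M}_W = \mathscr{M}_{\Gamma_\psi z_0} + \mathscr{N}$, where $\mathscr{N} \defeq \mathscr{M}_{\psi\Gamma_{\psi^{-1}} z_0}$, so evaluating at $z_0$ yields $\mathscr{N}_{z_0} = C(\mu_{z_0} - \nu_\psi)$. Lemma \ref{lem4.2}(3) with $\gamma = \psi$, $V = \Gamma_{\psi^{-1}} z_0$ gives $\psi_\infty^*(\mathscr{N}_{\psi z_0}) = (\mathscr{M}_{\Gamma_{\psi^{-1}} z_0})_{z_0} = C\nu_{\psi^{-1}}$; taking total masses (a homeomorphism-pullback preserves total mass) gives $\mathscr{N}_{\psi z_0}(S_{\infty}^{1}) = C\,\nu_{\psi^{-1}}(S_{\infty}^{1})$. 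On the other hand $\mathscr{N}$ is $D$-conformal, so $\mathscr{N}_{\psi z_0}(S_{\infty}^{1}) = \int_{S_{\infty}^{1}} P(z_0,\psi z_0,\zeta)^D\,d\mathscr{N}_{z_0}(\zeta) = C\int_{S_{\infty}^{1}} P(z_0,\psi z_0,\zeta)^D\,d(\mu_{z_0} - \nu_\psi)(\zeta)$. Combining the last two displays, dividing by $C$, and using that $\mu_{\psi z_0}(S_{\infty}^{1}) = \int P(z_0,\psi z_0,\zeta)^D\,d\mu_{z_0}(\zeta)$ (the conformal-density relation) and $\mu_{\psi z_0}(S_{\infty}^{1}) = \mu_{z_0}(S_{\infty}^{1}) = 1$ ($\Gamma$-invariance of $\mathscr{M}$), I obtain $\nu_{\psi^{-1}}(S_{\infty}^{1}) = 1 - \int P(z_0,\psi z_0,\zeta)^D\,d\nu_\psi(\zeta)$. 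Since $\psi$ ranges over the symmetric set $\Psi$, replacing $\psi$ by $\psi^{-1}$ and rearranging produces exactly the identity in (3).

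All of the analytic substance is contained in Lemma \ref{lem4.2}, so the only real care is needed in the last paragraph: one must keep straight the direction of the equivariance relation in Lemma \ref{lem4.2}(3) (which of $\psi, \psi^{-1}$ sits on which side) and pass correctly between $\mathscr{N}_{z_0}$ and $\mathscr{N}_{\psi z_0}$ via the conformal-density transformation rule. Beyond this bookkeeping I do not anticipate any difficulty; the choice of $\mathcal{B}$, the normalization, and the derivation of (1)--(2) are routine.
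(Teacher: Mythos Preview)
Your argument is correct and follows essentially the same route as the paper: apply Lemma~\ref{lem4.2} to the orbit $W=\Gamma z_0$ with a family $\mathcal{B}$ containing the pieces $V_\psi=\Gamma_\psi z_0$, use additivity on $W=\{z_0\}\amalg\coprod_\psi V_\psi$ for (2), and derive (3) from the ping-pong identity $\psi V_{\psi^{-1}}=W\setminus V_\psi$ together with the equivariance and conformal-density relations. Your version is in fact slightly more careful than the paper's in two places---you explicitly normalize by $C=(\mathscr{M}_W)_{z_0}(S^1_\infty)$ to secure (1), and you track the $\psi\leftrightarrow\psi^{-1}$ swap at the end---but the substance is identical.
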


\begin{proof}
Let $W = \Gamma z_0$ be an orbit of $\Gamma$. Then $$W = \{z_0\} \amalg \left(\amalg_{\psi \in \Psi} V_{\psi}\right)$$ where $V_{\psi} = \{\gamma z_0: \gamma \in \Gamma \text{ starts with } \psi\}$. Let $\mathcal{B}$ be a countable collection of subsets of $W$ which has the form $\{z_0\} \amalg (\amalg_{\psi \in \Psi'}V_{\psi})$ or $\amalg_{\psi \in \Psi'}V_{\psi}$ where $\Psi' \subset \Psi$.

Apply Lemma \ref{lem4.2} and set $\mathscr{M} = \mathscr{M}_W$, $\mu_{z_0} = \mu_{W, z_0} \in \mathscr{M}$ and $\nu_{\psi} = \mu_{V_{\psi},z_0} \in \mathscr{M}_{V_{\psi}}$. Then
$$\mu_{z_0} = \mu_{z_0,z_0}+\sum_{\psi \in \Psi}\mu_{V_{\psi},z_0} = 0 + \sum_{\psi \in \Psi}\nu_{\psi}.$$

Since $\mathscr{M}_{V_{\psi^{-1}}} = \psi^*_{\infty}(\mathscr{M}_{W-V_{\psi}}) = \psi^*_{\infty}(\mathscr{M} - \mathscr{M}_{V_{\psi}})$, 
$$\mu_{V_{\psi^{-1}}, \psi(z_0)} = \psi^*_{\infty}(\mu_{z_0} - \nu_{\psi}).$$

On the other hand, $d\mu_{V_{\psi^{-1}}, \psi(z_0)} = P(z_0,\psi^{-1}z_0,\cdot)^D d\mu_{V_{\psi^{-1}}}$ since $\mathscr{M}_{V_{\psi^{-1}}}$ is a $D$-conformal density.
Hence, $$\int_{S_{\infty}^{1}} P(z_0,\psi^{-1}z_0,\cdot)^D d\nu_{\psi^{-1}} = \int_{S_{\infty}^{1}} d(\mu_{z_0} - \nu_{\psi}) = 1 - \int_{S_{\infty}^{1}} d\nu_{\psi}.$$
\end{proof}

\section{The finite co-area case}
This section is devoted to proving the main theorem for free Fuchsian groups of rank $k \ge 2$ with finite-area quotient surfaces. The foundation of the proof is the Culler-Shalen's paradoxical decompositions of the Patterson-Sullivan measure developed in the previous section. For each generator, these decomposed measures give rise to a good lower bound for the displacement (Lemma \ref{estimates_lem}), which allows us to deduce the inequality satisfied by displacement of all the generators.

\begin{thm}\label{finite_area_thm}
Let $k \ge 2$ be an integer and let $\Phi$ be a finite co-area free Fuchsian group generated by $\{g_1 \cdots g_k \}$. Let $z$ be any point in $\bbH^2$ and denote by $d_i = \text{d}(z, g_iz)$ the hyperbolic distance between $z$ and $g_iz$. Then we have
\begin{align}
	\sum_{i=1}^k \arccos \left(\tanh\left(\frac{d_i}{2}\right)\right)
	&\le \frac{\pi}{2}.
\end{align}
In particular, there exists an $i \in \{1,\cdots,k\}$ such that $$d_i \ge \log\left(\frac{1+\cos\frac{\pi}{2k}}{1-\cos\frac{\pi}{2k}}\right).$$
\end{thm}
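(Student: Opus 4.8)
The plan is to combine Proposition \ref{paradecomp_prop} with the hypothesis on $\Sigma_\Phi$ to reduce the inequality to a geometric estimate on the individual measures $\nu_\psi$. First I would invoke the finite co-area hypothesis: a finite co-area Fuchsian group is a lattice, so every $\Phi$-invariant positive superharmonic function on $\bbH^2$ is constant, and Proposition \ref{prop3.9} then forces the $\Phi$-invariant conformal density $\mathscr{M}$ produced by Proposition \ref{paradecomp_prop} to be a constant multiple of the area density $\mathscr{A}$; after the normalization $\mu_{z_0}(S_\infty^1)=1$ we get $\mu_{z_0}=A_{z_0}$, the normalized arc length measure on $\bbS_\infty^1$ as seen from $z_0$. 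In particular $D=1$ here. So the decomposition in part (2) of Proposition \ref{paradecomp_prop} becomes a decomposition of normalized arc length: $A_{z_0}=\sum_{\psi\in\Psi}\nu_\psi$, and the functional equation in part (3) reads
\begin{equation}\label{functional_eq_plan}
\int_{S_\infty^1} P(z_0,\psi^{-1}z_0,\zeta)\, d\nu_{\psi^{-1}}(\zeta) = 1 - \nu_\psi(S_\infty^1).
\end{equation}

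Next I would extract from \eqref{functional_eq_plan}, together with $\nu_\psi\le A_{z_0}$, a lower bound on the displacement $d(z_0,\psi z_0)$ purely in terms of the masses $a_\psi := \nu_\psi(S_\infty^1)$; this is the content of the stated Lemma \ref{estimates_lem}. The mechanism: for a fixed displacement $d:=d(z_0,\psi^{-1}z_0)$, the Poisson kernel $P(z_0,\psi^{-1}z_0,\cdot)$ ranges in the interval $[\frac{1}{\cosh d+\sinh d},\,\frac{1}{\cosh d-\sinh d}]=[e^{-d},e^{d}]$, being largest precisely in the direction of $\psi^{-1}z_0$. Since $\nu_{\psi^{-1}}$ has total mass $a_{\psi^{-1}}$ and is dominated by arc length $A_{z_0}$, the integral on the left of \eqref{functional_eq_plan} is maximized — for fixed mass $a_{\psi^{-1}}$ — by concentrating $\nu_{\psi^{-1}}$ on the arc (of $A_{z_0}$-measure $a_{\psi^{-1}}$) where $P$ is largest, i.e. a symmetric arc about the attracting direction. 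Writing this extremal integral explicitly via the change of variables to the visual angle at $z_0$ and computing, one gets an inequality of the form $1-a_\psi \le F(d_{\psi^{-1}},\,a_{\psi^{-1}})$, where $F$ is an explicit elementary function; solving for $d_{\psi^{-1}}$ yields the clean bound, which I expect to take the shape $a_{\psi} + a_{\psi^{-1}} \ge \frac{2}{\pi}\arccos(\tanh(d_\psi/2)) \cdot(\text{something})$ — more precisely, after optimizing, $\arccos(\tanh(d(z_0,\psi z_0)/2)) \ge \frac{\pi}{2}(a_{\psi^{-1}}+a_\psi - \text{lower order})$; the exact constant is pinned down by checking the equality case $\Phi=\Gamma(2)$, $z_0=\sqrt{-1}$, where each of the four $\nu_\psi$ has mass $\tfrac14$ and $d_i=\log(3+2\sqrt2)$. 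Summing the resulting lower bounds over the $k$ generators and using $\sum_{\psi\in\Psi}a_\psi = A_{z_0}(S_\infty^1)=1$ collapses the right-hand side to $\frac{\pi}{2}$, giving \eqref{ineq_intro}.

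Finally, the ``in particular'' clause is pure bookkeeping: if every $d_i < \log\!\big(\frac{1+\cos(\pi/2k)}{1-\cos(\pi/2k)}\big)$, then since $x\mapsto \arccos(\tanh(x/2))$ is strictly decreasing and $\tanh(x/2)=\frac{e^x-1}{e^x+1}$ sends that threshold to $\cos(\pi/2k)$, each term would exceed $\arccos(\cos(\pi/2k))=\pi/2k$, so the sum would exceed $k\cdot\frac{\pi}{2k}=\frac{\pi}{2}$, contradicting \eqref{ineq_intro}.

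The main obstacle, I expect, is Lemma \ref{estimates_lem}: justifying rigorously that, among Borel measures $\nu\le A_{z_0}$ of prescribed total mass, the one maximizing $\int P(z_0,\psi^{-1}z_0,\cdot)\,d\nu$ is the restriction of $A_{z_0}$ to the super-level set $\{P>t\}$ (a ``bathtub principle'' argument), and then carrying out the ensuing integral over that arc and inverting it to isolate $d_{\psi^{-1}}$ in the stated closed form. The rest — normalizing via Proposition \ref{prop3.9}, the symmetrization of the generating set, and the final summation — is comparatively routine.
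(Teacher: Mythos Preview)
Your plan is essentially the paper's own argument: Proposition~\ref{paradecomp_prop} together with Proposition~\ref{prop3.9} force $D=1$ and $\mu_{z_0}=A_{z_0}$, the bathtub principle (quoted in the paper as Lemma~\ref{measure_theory_lemma}) proves Lemma~\ref{estimates_lem} in the explicit form $e^{d_i}\ge \tan(\pi(1-a_{\psi^{-1}})/2)/\tan(\pi a_{\psi}/2)$, and the step you call ``after optimizing'' is the elementary inequality $\sin x\,\sin y\le \sin^2\!\bigl(\tfrac{x+y}{2}\bigr)$, which converts this into $\arccos(\tanh(d_i/2))\le \tfrac{\pi}{2}(a_{\psi}+a_{\psi^{-1}})$ and makes the sum collapse to $\pi/2$. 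One slip to fix: your displayed inequality for $\arccos(\tanh(d_\psi/2))$ should read $\le$, not $\ge$, and correspondingly you are summing upper bounds on the $\arccos$ terms, not lower bounds.
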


\begin{proof}
According to Proposition \ref{paradecomp_prop}, there exist a number $D \in [0,1]$ and Borel measures $\mu_{z}$, $\nu_{g_i}$ and $\nu_{{g_i}^{-1}}$ satisfying conditions (1)--(3) in the proposition. Also, since $\Sigma_{\Phi}$ has finite area, it does not admit any non-constant positive superharmonic functions. Hence, by Proposition \ref{prop3.9}, the $D$-conformal density is a constant multiple of the area density which is a $1$-conformal density, i.e. $D=1$.

Let $\alpha_i = \nu_{g_i}(S_{\infty}^1)$ and $\beta_i = \nu_{{g_i}^{-1}}(S_{\infty}^1)$, for $i = 1, \cdots, k$. Without loss of generality, we assume that $\alpha_i \le \beta_i$ so that $\alpha_i \in (0, 1/2)$ and $\beta_i \in (0, 1)$.

For now, let us assume the following lemma.
\begin{lem}\label{estimates_lem}
	Let $a$ and $b$ be numbers such that $0 \le a \le 1/2$ and $0 \le b \le 1$, let $\gamma$ be an isometry of $\bbH^2$ and let $z$ be a point in $\bbH^2$. Suppose that $\nu$ is a measure on $S^1_{\infty}$ such that
	\begin{enumerate}
		\item $\nu \le A_z$
		\item $\nu(S^1_{\infty}) \le a$, and
		\item $\displaystyle\int_{S^1_{\infty}} P(z,\gamma^{-1}z,\cdot)d\nu \ge b$. 
	\end{enumerate}
	Then $$\text{d}(z, \gamma z) \ge \log \left(\frac{\tan (b\pi/2)}{\tan (a\pi/2)}\right).$$
\end{lem}

Applying the lemma with $a = \alpha_i$ and $b = 1-\beta_i$, we obtain
\begin{align*}
e^{d_i} \ge \frac{\tan(\pi (1-\beta_i)/2)}{\tan(\pi \alpha_i/2)} &= \frac{\cot(\pi \beta_i/2)}{\tan(\pi \alpha_i/2)}\\
&= \frac{\cos(\pi \beta_i/2) \cos(\pi \alpha_i/2)}{\sin(\pi \beta_i/2) \sin(\pi \alpha_i/2)}\\
&= \frac{\cos(\pi p_i)}{\sin(\pi \beta_i/2) \sin(\pi \alpha_i/2)}+1  \text{~where~} p_i = \frac{\alpha_i+\beta_i}{2}\in (0,1/2).
\end{align*}

Note that $\sin(x)\sin(y) \le \left(\sin \left(\dfrac{x+y}{2}\right)\right)^2 = \dfrac{1-\cos(x+y)}{2}$. \\Hence,
\begin{align*}
e^{d_i}+1 &\ge \frac{\cos(\pi p_i)}{\sin(\pi \beta_i/2) \sin(\pi \alpha_i/2)}+2\\
&\ge \frac{\cos(\pi p_i)}{\sin^2(\pi p_i/2)}+2\\
&= \frac{2\cos(\pi p_i)}{1- \cos(\pi p_i)}+2\\
&= \frac{2}{1- \cos(\pi p_i)}
\end{align*}

Solving for $p_i$, we obtain
$$\arccos \left(\frac{e^{d_i}-1}{e^{d_i}+1}\right) \le \pi p_i.$$
Summing over $i$, we get
$$\sum_{i=1}^{k}\arccos \left(\tanh\left(\frac{d_i}{2}\right)\right) \le \frac{\pi}{2}.$$

The last part of the theorem is easy to see, for if $d_i < \log\left(\dfrac{1+\cos\frac{\pi}{2k}}{1-\cos\frac{\pi}{2k}}\right)$ for all $i = 1, \cdots, k$, then $\displaystyle\sum_{i=1}^{k}\arccos \left(\tanh\left(\frac{d_i}{2}\right)\right) > \frac{\pi}{2}$, which is a contradiction.
\end{proof}

Now we turn to the proof of Lemma \ref{estimates_lem}. The following elementary lemma is needed.
\begin{lem}[\cite{CS}, Lemma 5.4]\label{measure_theory_lemma}
	Let $(X,\mathscr{B})$ be a measure space and let $\mu$ and $\mu_0$ be two finite measures such that $0 \le \mu_0 \le \mu$. Let $C$ be a Borel set such that $\mu(C) \ge \mu_0(X)$. Let $f$ be a measurable, nonnegative real-valued function on $X$ such that $\text{inf}f(C) \ge \text{sup}f(X-C)$. Then $\displaystyle\int_X f d\mu_0 \le \displaystyle\int_C f d\mu$.
\end{lem}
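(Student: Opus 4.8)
The statement to prove is Lemma \ref{estimates_lem}: given the measure $\nu$ on $S^1_\infty$ with $\nu \le A_z$, total mass $\le a$, and $\int_{S^1_\infty} P(z,\gamma^{-1}z,\cdot)\,d\nu \ge b$, we want the displacement bound $\mathrm{d}(z,\gamma z) \ge \log\bigl(\tan(b\pi/2)/\tan(a\pi/2)\bigr)$. The strategy is to reduce to a worst-case configuration using Lemma \ref{measure_theory_lemma}, then compute explicitly in the upper half-plane model. First I would normalize: put $z = \sqrt{-1}$ (or, in the disk model, $z = 0$) so that $A_z$ is ordinary normalized arc length $d\theta/2\pi$ on $S^1_\infty$, and let $\delta = \mathrm{d}(z,\gamma z)$, $w = \gamma^{-1}z$. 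The Poisson kernel $P(z,w,\zeta) = (\cosh\delta - \sinh\delta\cos\angle(w,z,\zeta))^{-1}$ depends on $\zeta$ only through the visual angle $\angle(w,z,\zeta)$ measured from $z$; it is a strictly positive function on the circle with a single maximum (at the endpoint of the geodesic ray from $z$ away from $w$) of value $(\cosh\delta - \sinh\delta)^{-1} = e^{\delta}$ and a single minimum of value $(\cosh\delta+\sinh\delta)^{-1} = e^{-\delta}$, and its super-level sets are arcs.

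\textbf{Key steps.}
The core inequality is an extremal problem: among all measures $\nu$ with $\nu \le A_z$ and $\nu(S^1_\infty)\le a$, which one maximizes $\int P\,d\nu$? Since $P$ is largest on an arc, the maximizer is $\nu = A_z|_C$, where $C$ is the arc of $A_z$-measure exactly $a$ centered at the maximum point of $P$ — this is precisely what Lemma \ref{measure_theory_lemma} delivers: taking $\mu = A_z$, $\mu_0 = \nu$, $C$ the chosen arc (so $\mu(C) = a \ge \nu(S^1_\infty) = \mu_0(X)$) and $f = P(z,\gamma^{-1}z,\cdot)$, whose super-level sets being arcs guarantees $\inf f(C) \ge \sup f(X\setminus C)$, we get $\int_{S^1_\infty} P\,d\nu \le \int_C P\,dA_z$. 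Combining with hypothesis (3), $b \le \int_C P\,dA_z$. Next I would evaluate $\int_C P(z,\gamma^{-1}z,\cdot)\,dA_z$ in coordinates: parametrizing the circle by the visual angle $\phi \in (-\pi,\pi]$ from $z$ with $\phi = 0$ at the maximum of $P$, one has $dA_z = d\phi/2\pi$ and $P = (\cosh\delta - \sinh\delta\cos\phi)^{-1}$ so that $C = \{|\phi| \le \pi a\}$ and
\begin{align*}
\int_C P\,dA_z = \frac{1}{2\pi}\int_{-\pi a}^{\pi a} \frac{d\phi}{\cosh\delta - \sinh\delta\cos\phi}.
\end{align*}
This is a standard integral: with the Weierstrass substitution $t = \tan(\phi/2)$ it evaluates to $\tfrac{1}{\pi}e^{\delta/2}\!\cdot\! e^{-\delta/2}\cdots$ — more precisely it equals $\frac{1}{\pi}\arctan\!\bigl(e^{\delta}\tan(\pi a/2)\bigr)$ after simplifying $\cosh\delta - \sinh\delta = e^{-\delta}$ and $\cosh\delta+\sinh\delta = e^{\delta}$. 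Hence $b \le \frac{1}{\pi}\arctan\!\bigl(e^{\delta}\tan(\pi a/2)\bigr)$, i.e. $\tan(\pi b/2) \le e^{\delta}\tan(\pi a/2)$ (using that $\tan$ is increasing on $(0,\pi/2)$ and that $a \le 1/2$, $b\le 1$ keep the arguments in range; when $a=0$ the hypotheses force $b=0$ and there is nothing to prove), which rearranges to exactly $\delta \ge \log\bigl(\tan(b\pi/2)/\tan(a\pi/2)\bigr)$.

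\textbf{Main obstacle.}
The conceptual content is entirely in the reduction to the extremal arc, and that is handled cleanly by Lemma \ref{measure_theory_lemma} once one checks the geometric fact that the super-level sets of the Poisson kernel $\zeta \mapsto P(z,\gamma^{-1}z,\zeta)$ are arcs centered at a fixed point — this follows because $P$ is a strictly decreasing function of the visual angle $|\phi|\in[0,\pi]$, which is immediate from the formula $\cosh\delta - \sinh\delta\cos\phi$ being increasing in $|\phi|$. So the main work is really the explicit integral computation $\frac{1}{2\pi}\int_{-\pi a}^{\pi a}(\cosh\delta-\sinh\delta\cos\phi)^{-1}d\phi = \frac{1}{\pi}\arctan(e^\delta\tan(\pi a/2))$; this is routine but must be done carefully, in particular tracking that the arctangent branch is correct for all $a\in(0,1/2]$ and that the degenerate endpoint cases ($a = 0$, or $\delta = 0$ forcing $a\ge b$ hence the bound $\delta\ge 0$) are consistent. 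I expect no genuine difficulty beyond this bookkeeping.
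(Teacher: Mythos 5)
Your proposal does not address the statement under review. The statement to be proved is Lemma \ref{measure_theory_lemma}, the purely measure-theoretic comparison: if $0\le\mu_0\le\mu$, $\mu(C)\ge\mu_0(X)$, and $\inf f(C)\ge\sup f(X-C)$ for a nonnegative measurable $f$, then $\int_X f\,d\mu_0\le\int_C f\,d\mu$. What you have written is instead an argument for Lemma \ref{estimates_lem} (the Poisson-kernel displacement estimate), in which Lemma \ref{measure_theory_lemma} is \emph{invoked as a black box} (``this is precisely what Lemma \ref{measure_theory_lemma} delivers''). Nowhere in the proposal is there any argument for the measure-theoretic inequality itself, so as a proof of the stated lemma it is a complete gap, not a partial one.

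The missing argument is short but must be supplied. Set $m=\inf f(C)$ and $M=\sup f(X-C)$, so $m\ge M\ge 0$. Then
\begin{equation*}
\int_X f\,d\mu_0=\int_C f\,d\mu_0+\int_{X-C} f\,d\mu_0\le\int_C f\,d\mu_0+M\,\mu_0(X-C),
\end{equation*}
while, since $\mu-\mu_0\ge 0$,
\begin{equation*}
\int_C f\,d\mu\ge\int_C f\,d\mu_0+m\,(\mu-\mu_0)(C).
\end{equation*}
Now $(\mu-\mu_0)(C)=\mu(C)-\mu_0(C)\ge\mu_0(X)-\mu_0(C)=\mu_0(X-C)$ by the hypothesis $\mu(C)\ge\mu_0(X)$, and hence $m\,(\mu-\mu_0)(C)\ge M\,\mu_0(X-C)$; combining the two displays gives the claim. (Incidentally, in the portion of your write-up devoted to Lemma \ref{estimates_lem} there is a factor-of-two slip: $\frac{1}{2\pi}\int_{-\pi a}^{\pi a}(\cosh\delta-\sinh\delta\cos\phi)^{-1}d\phi$ equals $\frac{2}{\pi}\arctan\left(e^{\delta}\tan(\pi a/2)\right)$, not $\frac{1}{\pi}\arctan\left(e^{\delta}\tan(\pi a/2)\right)$; your final inequality $\tan(\pi b/2)\le e^{\delta}\tan(\pi a/2)$ is nonetheless the correct one.)
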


\begin{proof} [Proof of Lemma \ref*{estimates_lem}]
	Let $h = \text{dist}(z, \gamma z)$ and set $s = \sinh(h)$ and $c = \cosh(h)$. Identify $\overline{\bbH^2}$ with the unit disk so that $z$ gets mapped to $0$ and $\gamma z$ gets mapped to the positive $y$-axis. The Poisson kernel is given by $$P(\phi) = P(z, \gamma^{-1}z,\zeta) = (c-s\cos\phi)^{-1}$$ where $\phi$ is the angle between the vertical axis and the ray from $0$ to $\zeta \in S_{\infty}^1$.
	
	Set $A = A_z$. Since $S_{\infty}^1$ has the round metric centered at $z$, the measure
	$dA = \frac{1}{\pi} d\phi$.
	
	Let $C \subset S_{\infty}^1$ be the arc defined by the inequality $\phi < \pi a$. Then we have $$A(C) = \frac{1}{\pi}\int_0^{\pi a} d\phi = a.$$
	
	Note that $P$ is positive and monotone decreasing for $0 \le \phi \le \pi$, we have $\text{inf}P(C) \ge \text{sup}P(S_{\infty}^1-C)$. Then by Lemma \ref*{measure_theory_lemma},
	\begin{align*}
		b \le \int_{S_{\infty}^1}P d\nu \le \int_C P dA &= \frac{1}{\pi}\int_{0}^{\pi a} \frac{1}{c-s\cos(\phi)}d\phi\\
		&= \frac{2}{\pi} \arctan \left(e^h \tan\left(\frac{\pi a}{2}\right)\right).
	\end{align*}
	Hence, 
	$$e^h \ge \frac{\tan\left(\pi b/2\right)}{\tan\left(\pi a/2\right)}.$$
\end{proof}

\section{The general case}\label{section_general}
Our goal for this section is to generalize Theorem \ref{finite_area_thm} to all free Fuchsian groups. In particular, we prove the Main Theorem stated in Introduction.
\begin{thm}\label{thm_general}
Let $k \ge 2$ be an integer and let $\Phi$ be a free Fuchsian group generated by $\{g_1 \cdots g_k \}$. Let $z$ be any point in $\bbH^2$ and denote by $d_i = \text{d}(z, g_iz)$ the hyperbolic distance between $z$ and $g_iz$. Then we have
\begin{align}\label{ineq_intro}
\sum_{i=1}^k \arccos \left(\tanh\left(\frac{d_i}{2}\right)\right)
&\le \frac{\pi}{2}.
\end{align}
In particular, there exists an $i \in \{1,\cdots,k\}$ such that $$d_i \ge \log\left(\dfrac{1+\cos\frac{\pi}{2k}}{1-\cos\frac{\pi}{2k}}\right).$$ When $k=2$, the upper bound is realized by $\Phi = \Gamma(2) = \left\langle \left( \begin{array}{cc}
1 & 2 \\
0 & 1 \end{array} \right), \left( \begin{array}{cc}
1 & 0 \\
2 & 1 \end{array} \right) \right\rangle$ and $z = \sqrt{-1}$.
\end{thm}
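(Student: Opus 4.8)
The plan is to reduce the general case to the finite co-area case (Theorem~\ref{finite_area_thm}) already proven, by exhibiting a conformal embedding of the quotient surface $\Sigma_\Phi$ into a finite-area surface in a way that is also a homotopy equivalence. First I would dispose of the degenerate possibilities: if $\Phi$ is elementary or not discrete the statement is vacuous or trivial, and if $\Sigma_\Phi$ already has finite area we are done by Theorem~\ref{finite_area_thm}. So I may assume $\Phi$ is a non-elementary free Fuchsian group of rank $k$ whose quotient surface has infinite area, hence has a nonempty convex core $C(\Sigma_\Phi)$ bounded by finitely many simple closed geodesics, with a funnel (half-plane quotient) glued to each boundary geodesic.

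The key geometric input is Lemma~\ref{lem_holemb}: each funnel end of $\Sigma_\Phi$ is conformally an annulus, so $\Sigma_\Phi$ is conformally the surface $S$ obtained by gluing a (finite) conformal annulus to each boundary component of $C(\Sigma_\Phi)$, and $S$ in turn sits holomorphically inside the finite-area surface $S_\infty$ obtained by gluing a half-infinite flat cylinder $S^1\times\bbR^+$ to each boundary geodesic. I would check that this inclusion $S\hookrightarrow S_\infty$ is a homotopy equivalence — indeed both $S$ and $S_\infty$ deformation retract onto $C(\Sigma_\Phi)$ — so on $\pi_1$ it identifies $\Phi$ with a finite co-area free Fuchsian group $\Phi_\infty$ of the same rank $k$, with the generators $g_1,\dots,g_k$ corresponding to generators of $\Phi_\infty$. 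Lifting the holomorphic embedding $\Sigma_\Phi\hookrightarrow S_\infty$ to the universal covers gives a holomorphic map $F\colon\bbH^2\to\bbH^2$ equivariant with respect to this isomorphism $\Phi\cong\Phi_\infty$.

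Now fix $z\in\bbH^2$ and set $w=F(z)$; by equivariance $F(g_iz)=g_i'w$ where $g_i'$ is the generator of $\Phi_\infty$ corresponding to $g_i$. Since $F$ is a holomorphic self-map of $\bbH^2$, the Schwarz–Pick lemma says it is distance non-increasing, so
\begin{align*}
\mathrm{d}(w,g_i'w)=\mathrm{d}(F(z),F(g_iz))\le \mathrm{d}(z,g_iz)=d_i.
\end{align*}
Applying Theorem~\ref{finite_area_thm} to $\Phi_\infty$ at the point $w$ yields $\sum_{i=1}^k \arccos(\tanh(\mathrm{d}(w,g_i'w)/2))\le \pi/2$. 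Because $x\mapsto \arccos(\tanh(x/2))$ is strictly decreasing on $(0,\infty)$, the displacement inequality $\mathrm{d}(w,g_i'w)\le d_i$ gives $\arccos(\tanh(d_i/2))\le \arccos(\tanh(\mathrm{d}(w,g_i'w)/2))$, and summing gives $\sum_{i=1}^k \arccos(\tanh(d_i/2))\le \pi/2$, which is (\ref{ineq_intro}). The "in particular" clause follows exactly as in Theorem~\ref{finite_area_thm}: if every $d_i$ were strictly below $\log\bigl((1+\cos\frac{\pi}{2k})/(1-\cos\frac{\pi}{2k})\bigr)$, monotonicity of $\arccos\tanh$ would force the left side above $\pi/2$.

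I expect the main obstacle to be the conformal geometry behind Lemma~\ref{lem_holemb} — precisely, showing that a hyperbolic funnel $\{(\rho,\theta): \rho>0\}$ with metric $d\rho^2+\cosh^2\!\rho\,d\theta^2$ is conformally a finite annulus rather than a punctured disk or a full cylinder, and that the two gluings ($S$ versus $S_\infty$) are compatible so that the natural map $S\to S_\infty$ is holomorphic and $\pi_1$-injective with image of full rank. One must verify the conformal modulus of the funnel is finite (so it embeds in a larger cylinder with room to spare), check that the gluing to $C(\Sigma_\Phi)$ along the geodesic boundary is conformally seamless on both sides, and confirm that $\Phi_\infty$ is again free of rank $k$ and torsion-free — this last point because $S_\infty$ is built by capping geodesic boundary circles with cylinders (not disks), so no new relations or torsion are introduced and $S_\infty$ has finite area equal to that of $C(\Sigma_\Phi)$. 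Once these conformal facts are in hand, the Schwarz–Pick step and the transfer of the inequality are routine.
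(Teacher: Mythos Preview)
Your proposal is correct and follows essentially the same approach as the paper: invoke Lemma~\ref{lem_holemb} to holomorphically embed $\Sigma_\Phi$ into a finite-area surface $S_\infty$, use that holomorphic maps are distance non-increasing (Schwarz--Pick), and transfer the inequality from Theorem~\ref{finite_area_thm} via the monotonicity of $x\mapsto\arccos(\tanh(x/2))$. Your lift-to-universal-covers formulation makes the Schwarz--Pick step more explicit than the paper's one-line ``holomorphic maps are distance non-increasing,'' and you omit the paper's preliminary reduction to points projecting into the convex core, but this reduction is not actually needed once you work with the global holomorphic embedding as you do.
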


\begin{rmk}\label{rmk_compare}
	Theorem \ref{thm_general} is strictly stronger than the $\log (2k-1)$ Theorem applied to Fuchsian groups. To see this, let $y_i = \arccos\left(\tanh\left(d_i/2\right)\right), i = 1, \cdots, k$. Then $d_i = \log \left(\dfrac{1+\cos y_i}{1-\cos y_i}\right)$ and $$\displaystyle\sum_{i=1}^{k}\dfrac{1}{1+e^{d_i}} \le \frac{1}{2} \text{~if and only if~} \displaystyle\sum_{i=1}^{k} (1-\cos y_i) \le 1.$$
	
	Now assume Theorem \ref{thm_general}, i.e. $\sum_{i=1}^{k}y_i \le \frac{\pi}{2}$. Then $\sum_{i=1}^{k}(1-\cos y_i)$ attains its maximum value $1$ when one of the $y_i$ equals $\pi/2$ and the rest all equal to $0$. For another direction, suppose $\sum_{i=1}^{k} (1-\cos y_i) \le 1$, then letting $y_1 = \cdots = y_k = \arccos(1-1/k)$ gives $\sum_{i=1}^{k} y_i = k\arccos(1-1/k) > \pi/2$ for all $k \ge 2$.	
\end{rmk}

To prove the theorem, we need the following crucial lemma.

\begin{lem} \label{lem_holemb}
Any Riemann surface of finite topological type with negative Euler characteristic admits a holomorphic embedding into a Riemann surface of finite hyperbolic area. Moreover, this embedding is a homotopy equivalence.
\end{lem}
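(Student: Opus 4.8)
The plan is to build the ambient finite-area surface $S_\infty$ by an explicit surgery on the given surface $S$, using the classification of ends of hyperbolic surfaces of finite topological type. First I would pass to the intrinsic hyperbolic metric: a Riemann surface $S$ of finite topological type with $\chi(S)<0$ is uniformized as $\bbH^2/\Phi$ for a finitely generated Fuchsian group $\Phi$. Its convex core $C(S)$ is a compact surface with geodesic boundary, possibly with finitely many cusps, and $S \setminus C(S)$ is a finite disjoint union of \emph{funnels}, one for each boundary geodesic of $C(S)$; a funnel attached to a boundary geodesic of length $\ell$ is isometric to $\{(\rho,t): \rho>0,\ t\in\bbR/\ell\bbZ\}$ with metric $d\rho^2+\ell^2\cosh^2\rho\, dt^2$.

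The key computation is conformal: I would check that each funnel is conformally a half-open annulus of \emph{finite} modulus. Introducing the conformal coordinate $\sigma = \int_0^\rho \frac{d\rho'}{\ell\cosh\rho'}$, the funnel metric becomes $\ell^2\cosh^2\rho\,(d\sigma^2+dt^2)$, i.e.\ conformally the flat cylinder $(0,\sigma_\infty)\times(\bbR/\ell\bbZ)$ where $\sigma_\infty = \int_0^\infty \frac{d\rho}{\ell\cosh\rho} = \frac{\pi}{2\ell}<\infty$. Thus the funnel is conformally equivalent to the annulus $\{1<|w|<e^{\pi/2}\}$ (after rescaling), a \emph{bounded} annulus, and in particular it embeds holomorphically into the complete half-infinite flat cylinder $(0,\infty)\times(\bbR/\ell\bbZ)$, which is conformally a punctured disk. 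Geometrically, this half-infinite cylinder is exactly the conformal structure of a hyperbolic cusp of the appropriate ``width.'' Now I would define $S_\infty$ by deleting each funnel from $S$ and gluing, along each boundary geodesic of $C(S)$, a half-infinite cylinder $S^1\times\bbR^+$ in place of it; equivalently, double $C(S)$ across nothing but instead cap each boundary geodesic with a cusp neighborhood, producing a surface of finite topological type with no boundary and only cusp ends. By construction the inclusion $S\hookrightarrow S_\infty$ is holomorphic (it is a biholomorphism on $C(S)$ and the funnel-into-cylinder inclusion computed above on each end) and is a homotopy equivalence (we have only replaced each end's funnel, a half-open annulus deformation-retracting onto the boundary geodesic, by a cylinder which also deformation-retracts onto that same core curve).

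Finally I must confirm $S_\infty$ carries a finite-area hyperbolic metric. Since $C(S)$ is a compact surface with totally geodesic boundary circles (and possibly cusps), gluing a cusp to each boundary circle is possible precisely because a cusp neighborhood has a natural horocyclic boundary curve of any prescribed length; more cleanly, $S_\infty$ is a finite-type surface with $\chi(S_\infty)=\chi(S)<0$ and all ends cusps, hence by uniformization it admits a complete hyperbolic metric of finite area $-2\pi\chi(S_\infty)$. The Lemma then follows: $S \hookrightarrow S_\infty$ is a holomorphic homotopy equivalence into a finite-area Riemann surface.

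\textbf{Main obstacle.} The substantive point — and the one I would write out carefully — is the conformal description of the funnel: showing the integral $\int_0^\infty \frac{d\rho}{\cosh\rho}$ converges (so a funnel has \emph{finite} conformal modulus and hence embeds conformally into a punctured disk rather than being all of $\bbC^*$). Everything else is bookkeeping with the thick-thin/convex-core decomposition and the functoriality of uniformization. A secondary point to state precisely is that the glued surface $S_\infty$ really is of finite topological type with only cusp ends, so that uniformization delivers \emph{finite} area; this is immediate once the surgery is set up, but it is what makes the final sentence of the proof strategy in the introduction — ``the inequality holds on $S_\infty$'' via Theorem \ref{finite_area_thm}, together with holomorphic maps being distance non-increasing and $\arccos\tanh(x/2)$ strictly decreasing — go through.
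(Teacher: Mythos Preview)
Your proposal is correct and follows essentially the same approach as the paper: show each funnel is conformally a finite-modulus annulus, build $S_\infty$ by attaching half-infinite cylinders to the boundary geodesics of the convex core $C(S)$, and observe that $S_\infty$ has only cusp ends and hence finite hyperbolic area. The only differences are cosmetic---the paper computes the funnel's modulus via the explicit map $\phi(z)=e^{-2\pi i\log z/\log a}$ on the upper half-plane rather than via Fermi coordinates, and it certifies that the ends of $S_\infty$ are cusps using the inequality $\ell(\gamma)<\pi/\mathrm{mod}(A)$ rather than your direct ``half-infinite cylinder $\cong$ punctured disk $\Rightarrow$ cusp under uniformization'' argument.
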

\begin{proof}
Let $S$ be a Riemann surface of finite topological type with negative Euler characteristic. We first show that a funnel end $\mathcal{E}$ of $S$ is conformal to an annulus. To see this, consider the action of $\pi_1(S)$ on the upper-half space $\bbH^2$. Up to conjugation, we can assume that the element $\gamma \in \pi_1(S)$ representing the end fixes the positive imaginary axis. Then $\gamma(z) = az$ for some $a>1$ and $\mathcal{E} = R / \langle \gamma \rangle$, where $R=\{z \in \bbH^2 ~|~ Im(z)>0 {\text ~and~} Re(z) \ge 0\}$. Consider the map $\phi(z) = e^{-2\pi i \log z/ \log a}$. $\phi$ is holomorphic in $R$ and $\phi(R)$ is the annulus $A = \{1< |\phi| < e^{\pi^2/\log a}\}$. Since $\phi$ is $\gamma$ invariant, $\mathcal{E}$ is confomal to $A$. Therefore, $S$ is conformal to the surface $S'$ obtained by attaching a finite Euclidean cylinder $S^1 \times (0,t)$ to each boundary geodesic of its convex core $C(S)$. 

Consider the Riemann surface $S_{\infty}$ obtained by attaching a half infinite Euclidean cylinder $S^1 \times \bbR^+$ to each boundary geodesic of $C(S)$. We claim that $S_{\infty}$ has finite area. To see this, first note that if $\gamma$ is a geodesic in the homotopy class of a finite cylinder $A$, then its hyperbolic length $\ell(\gamma)$ satisfies $\ell(\gamma) < \pi/(\text{modulus of $A$})$. Since a half-infinite cylinder contains cylinders of arbitrarily large moduli, the hyperbolic length of the geodesic $\gamma$, if it exists, would satisfy $\ell(\gamma) < \varepsilon$ for any $\varepsilon >0$. Hence, there are no geodesics in the homotopy classes of glued cylinders, i.e. the ends of $S_{\infty}$ are cusps.

Clearly, $S'$ is embedded in $S_{\infty}$ and the embedding is a homotopy equivalence.
\end{proof}

\begin{proof}[Proof of Theorem 4.1]
Suppose the quotient surface $\Sigma = \bbH^2/\Phi$ has infinite area. Fix $z \in \bbH^2$. We can assume that the projection of $z$ on $\Sigma$ lies in the convex core $C(\Sigma)$ because otherwise we can apply the nearest point retraction map $R : \Sigma \to C(\Sigma)$ to project $z$ and $g_i(z)$ onto the boundary $\partial C(\Sigma)$ which decreases the distance $d(z, g_i(z))$. Then for $i =1, \cdots, k$, the projections of the geodesics connecting $z$ to $g_i(z)$ lie in $C(\Sigma)$.
 
By Lemma \ref{lem_holemb}, there is a holomorphic embedding of $\Sigma$ into a finite area surface $\Sigma_{\infty}$. Since holomorphic maps are distance non-increasing with respect to the hyperbolic metric, we get $d_{\Sigma} (z, g_i(z)) \ge d_{\Sigma_{\infty}} (z, g_i(z))$. Since $\arccos(\tanh(x/2))$ is strictly decreasing, we have
\begin{align*}
\sum_{i=1}^k \arccos \left(\tanh\left(\frac{d_{\Sigma} (z, g_i(z))}{2}\right)\right)
& \le \sum_{i=1}^k \arccos \left(\tanh\left(\frac{d_{\Sigma_{\infty}} (z, g_i(z))}{2}\right)\right)\\
& \le \frac{\pi}{2}.
\end{align*}
The second inequality follows as $\Sigma_{\infty}$ has finite area.

Straightforward calculation shows that when $k=2$ the upper bound is realized by the given group and the point.
\end{proof}

\section{Quantitative geometry of hyperbolic surfaces}\label{section_app}
In this section, we discuss applications of Theorem \ref{thm_general} to quantitative geometry of hyperbolic surfaces.

\subsection{The two-dimensional Margulis constant}
Let $M$ be a closed orientable hyperbolic $n$-manifold. A positive real number $\varepsilon$ is called a {\it Margulis number} for $M$ if for any $z \in \bbH^n$ and $\alpha, \beta \in \pi_1M$, $\text{max}\{d(z,\alpha(z)), d(z,\beta(z))\} < \varepsilon$ implies that $\alpha$ and $\beta$ commute. The Margulis Lemma (\cite{BP}, Chapter D) states that for every $n \ge 2$ there is a positive constant which is a Margulis number for every closed orientable hyperbolic $n$-manifold. The largest such constant $\varepsilon_n$ is called the {\it Margulis constant} for closed orientable hyperbolic $n$-manifold.

An immediate corollary of Theorem \ref{thm_general} recovers the following result due to Yamada.
\begin{cor}[Yamada, \cite{Yamada}]
	The two-dimensional Margulis constant $\varepsilon_2$ equals $\log(3+2\sqrt{2})$.
\end{cor}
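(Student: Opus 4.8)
The plan is to deduce the corollary directly from Theorem \ref{thm_general} by analyzing the two-generator case $k=2$. The inequality (\ref{ineq_intro}) with $k=2$ reads $\arccos(\tanh(d_1/2)) + \arccos(\tanh(d_2/2)) \le \pi/2$, and the particular consequence states that $\max\{d_1,d_2\} \ge \log(3+2\sqrt 2)$, which is exactly the bound $\log\big((1+\cos\frac{\pi}{4})/(1-\cos\frac{\pi}{4})\big)$ after simplifying $\cos\frac{\pi}{4}=\frac{1}{\sqrt 2}$. So the first step is to verify that $\log(3+2\sqrt 2)$ is a Margulis number for every closed orientable hyperbolic surface: given $\alpha,\beta\in\pi_1 M$ with $\max\{d(z,\alpha z),d(z,\beta z)\}<\log(3+2\sqrt 2)$, I want to conclude $\alpha$ and $\beta$ commute. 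Since $\pi_1 M$ of a closed hyperbolic surface is torsion-free, a classical fact about Fuchsian groups is that two non-commuting elements generate either a free group of rank $2$ or a group that is not discrete; more precisely, a discrete torsion-free two-generator subgroup of $\mathrm{PSL}(2,\bbR)$ is either free of rank $2$ (hence covered by Theorem \ref{thm_general}) or cyclic (hence abelian). If $\langle\alpha,\beta\rangle$ were free of rank $2$, Theorem \ref{thm_general} would force $\max\{d_1,d_2\}\ge\log(3+2\sqrt 2)$, contradicting the hypothesis; therefore $\langle\alpha,\beta\rangle$ is cyclic and $\alpha,\beta$ commute. This shows $\varepsilon_2\ge\log(3+2\sqrt 2)$.

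The second step is the reverse inequality $\varepsilon_2\le\log(3+2\sqrt 2)$, i.e. showing no larger number works. For this I would exhibit a single closed orientable hyperbolic surface together with a point $z$ and non-commuting $\alpha,\beta$ with $\max\{d(z,\alpha z),d(z,\beta z)\}$ equal to (or arbitrarily close to) $\log(3+2\sqrt 2)$. The realization case in Theorem \ref{thm_general} already gives the sharp configuration at the group level: $\Phi=\Gamma(2)$ with $z=\sqrt{-1}$ achieves $d_1=d_2=\log(3+2\sqrt 2)$. Since $\Gamma(2)$ is a non-cocompact lattice, one must promote this to a \emph{closed} surface example (or note that the Margulis constant for the closed case coincides with the general one); the standard route is an approximation argument — take a sequence of closed hyperbolic surfaces whose holonomy groups converge geometrically to something containing a copy of the extremal free group, or simply cite that the two-generator extremal configuration can be embedded in a closed surface group after a small perturbation — so that $\max\{d(z,\alpha z),d(z,\beta z)\}$ can be made as close to $\log(3+2\sqrt 2)$ as desired while $\alpha,\beta$ still fail to commute. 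Combining the two inequalities gives $\varepsilon_2=\log(3+2\sqrt 2)$, and the agreement with Yamada's value in \cite{Yamada} is then just a remark.

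I expect the main obstacle to be the second step rather than the first: the first step is essentially a clean application of Theorem \ref{thm_general} plus the structure theory of two-generator Fuchsian groups, but the sharpness direction requires care about whether one is computing the Margulis constant for the class of all hyperbolic surfaces or specifically closed ones, and about transferring the $\Gamma(2)$ example (which lives on a thrice-punctured sphere) into the closed category. I would handle this by invoking the standard fact that closed hyperbolic surface groups can be constructed containing two-generator subgroups whose $z$-displacements approximate the extremal values — for instance via a Fenchel–Nielsen deformation degenerating toward a cusped surface — so that $\varepsilon_2$ for closed surfaces is not strictly larger than the bound forced by the free-group inequality. This keeps the corollary's proof short: invoke Theorem \ref{thm_general} for the lower bound, invoke an approximation/embedding argument for the upper bound, and conclude equality.
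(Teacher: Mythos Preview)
Your approach is essentially the paper's: apply Theorem \ref{thm_general} with $k=2$, use that two non-commuting elements of a torsion-free Fuchsian group generate a rank-$2$ free group to get the lower bound, and invoke the $\Gamma(2)$ realization for sharpness. The paper's proof is briefer on the upper-bound side---it simply says ``this number is realized and therefore it is the Margulis constant'' without confronting the closed-versus-cusped issue you flag---so your extra care about approximating the $\Gamma(2)$ configuration inside closed surface groups is, if anything, more scrupulous than the original.
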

\begin{proof}
	Let $\Gamma$ be a discrete group of orientation-preserving isometries of $\bbH^2$. Then $\varepsilon$ is a Margulis number for $\bbH^2/\Gamma$ if and only if for any non-commuting pair $\alpha, \beta \in \Gamma$ and every point $z \in \bbH^2$, at least one of $\alpha$ and $\beta$ moves the point $z$ at least $\varepsilon$ distance away from itself. Moreover, the group generated by $\alpha$ and $\beta$ is free of rank $2$. Theorem \ref{thm_general} (with $k=2$) gives the desired Margulis number. Furthermore, this number is realized and therefore it is the Margulis constant.
\end{proof}

\subsection{Lengths of a pair of closed loops}
In this subsection, we discuss the consequences of Theorem \ref{thm_general} on lengths of a pair of based loops on a hyperbolic surface.

\begin{cor} 
If $\alpha_1$ and $\alpha_2$ are two loops on a hyperbolic surface $\Sigma$ based at a point $p$ such that they define non-commuting elements in the fundamental group $\pi_1(\Sigma, p)$, then $$\sinh(\ell_1/2)\cdot \sinh(\ell_2/2) \ge 1$$ where $\ell_i$ denotes the hyperbolic length of $\alpha_i, i=1,2$. 
\end{cor}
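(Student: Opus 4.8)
The plan is to deduce this corollary directly from Theorem \ref{thm_general} with $k=2$. Let $\Phi = \langle \alpha_1, \alpha_2\rangle \le \pi_1(\Sigma,p)$. Since $\alpha_1$ and $\alpha_2$ do not commute, $\Phi$ is a non-elementary free group of rank $2$ (two non-commuting elements in a Fuchsian group generate a free group), hence a free Fuchsian group to which Theorem \ref{thm_general} applies. Lift $p$ to a point $z \in \bbH^2$; then $\ell_i$, the length of $\alpha_i$ as a based loop, equals $d(z, \alpha_i z) =: d_i$. Theorem \ref{thm_general} gives
\begin{align*}
\arccos\left(\tanh\left(\frac{d_1}{2}\right)\right) + \arccos\left(\tanh\left(\frac{d_2}{2}\right)\right) \le \frac{\pi}{2}.
\end{align*}

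Next I would convert this trigonometric inequality into the hyperbolic one. Write $y_i = \arccos(\tanh(d_i/2))$, so $y_i \in (0,\pi/2]$ and $\cos y_i = \tanh(d_i/2)$, whence $\sin y_i = \operatorname{sech}(d_i/2)$ (using $1 - \tanh^2 = \operatorname{sech}^2$ and positivity). The inequality $y_1 + y_2 \le \pi/2$ is equivalent to $y_1 \le \pi/2 - y_2$, and since cosine is decreasing on $[0,\pi/2]$ this gives $\cos y_1 \ge \cos(\pi/2 - y_2) = \sin y_2$, i.e.
\begin{align*}
\tanh\left(\frac{d_1}{2}\right) \ge \operatorname{sech}\left(\frac{d_2}{2}\right).
\end{align*}
Multiplying through by $\cosh(d_1/2)\cosh(d_2/2) > 0$ yields $\sinh(d_1/2)\cosh(d_2/2) \ge \cosh(d_1/2)$, which I would then manipulate into the symmetric form $\sinh(d_1/2)\sinh(d_2/2) \ge 1$ by squaring and using $\cosh^2 = 1 + \sinh^2$: from $\sinh^2(d_1/2)\cosh^2(d_2/2) \ge \cosh^2(d_1/2) = 1 + \sinh^2(d_1/2)$ we get $\sinh^2(d_1/2)(\cosh^2(d_2/2) - 1) \ge 1$, i.e. $\sinh^2(d_1/2)\sinh^2(d_2/2) \ge 1$, and taking positive square roots finishes it.

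I do not anticipate a genuine obstacle here; the content is entirely in Theorem \ref{thm_general}, and the remaining work is the elementary identity $\arccos(\tanh(x/2)) = \arcsin(\operatorname{sech}(x/2))$ together with the fact that $y_1 + y_2 \le \pi/2 \iff \cos y_1 \ge \sin y_2$ for angles in $[0,\pi/2]$. The only point that deserves a sentence of care is the justification that $\langle\alpha_1,\alpha_2\rangle$ is free of rank $2$ — this is the standard fact that a two-generator subgroup of a discrete subgroup of $\mathrm{PSL}(2,\bbR)$ containing no relation is free, and a non-elementary one has this property (the same observation already used in the proof of the Margulis constant corollary above). One should also note the inequality is sharp exactly when $y_1 + y_2 = \pi/2$, matching the extremal configuration for Theorem \ref{thm_general}.
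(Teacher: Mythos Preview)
Your proposal is correct and follows essentially the same approach as the paper: lift the basepoint, observe that $\langle\alpha_1,\alpha_2\rangle$ is free of rank~$2$, apply Theorem~\ref{thm_general} with $k=2$, and then convert the resulting trigonometric inequality into the hyperbolic one. The only cosmetic difference is in that last conversion: the paper applies cosine directly to the sum $y_1+y_2\le\pi/2$ and expands via the addition formula to get $\tanh(\ell_1/2)\tanh(\ell_2/2)\ge\operatorname{sech}(\ell_1/2)\operatorname{sech}(\ell_2/2)$, from which $\sinh(\ell_1/2)\sinh(\ell_2/2)\ge1$ is immediate, whereas your route through $\cos y_1\ge\sin y_2$ and squaring is a slightly longer path to the same place.
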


\begin{proof}
Let $z$ be a lift of the base point $p$ in $\bbH^2$. Since $[\alpha_1], [\alpha_2] \in \pi_1\Sigma$ generate a free subgroup of rank $2$ and $d(z, [\alpha_i]z) = \ell_i$, by Theorem \ref{thm_general}, $\sum_{i=1}^2 \arccos(\tanh(\ell_i/2)) \le \pi/2.$

Applying cosine function on both sides of the inequality, we get
$$\tanh(\ell_1/2)\cdot \tanh(\ell_2/2) - \frac{1}{\cosh(\ell_1/2)\cdot \cosh(\ell_2/2) } \ge 0$$ and the desired inequality follows.
\end{proof}

The above corollary improves a result of Buser's (\cite{Bu}) which gives the same inequality but requires $\alpha_1$ and $\alpha_2$ to be two intersecting simple closed geodesics.

\end{document}